\title{The asymptotic dimension of box spaces of virtually nilpotent groups}
\author{Thiebout Delabie and Matthew C. H. Tointon}
\date{}
\thanks{The authors are supported by grant FN 200021\_163417/1 of the Swiss National Fund for scientific research.}
\newtheorem{theorem}{Theorem}[section]
\newtheorem{lemma}[theorem]{Lemma}
\newtheorem{proposition}[theorem]{Proposition}
\newtheorem{corollary}[theorem]{Corollary}
\theoremstyle{definition}
\newtheorem{definition}[theorem]{Definition}
\theoremstyle{remark}
\newtheorem*{remark*}{Remark}
\newcommand{\Cay}{\operatorname{Cay}}
\newcommand{\diam}{\operatorname{diam}}
\newcommand{\asdim}{\operatorname{asdim}}
\newcommand{\N}{\mathbb{N}}
\newcommand{\Z}{\mathbb{Z}}
\newcommand{\unif}{\text{\textup{unif}}}
\numberwithin{equation}{section}
\begin{document}

\maketitle

\begin{abstract}
We show that every box space of a virtually nilpotent group has asymptotic dimension equal to the Hirsch length of that group.
\end{abstract}

\section{Introduction}
The principal objects of study of this note are so-called \emph{box spaces}. These are metric spaces formed by stitching together certain finite quotients of residually finite groups, and have been the subject of much recent research, not least thanks to their utility in constructing metric spaces with unusual properties. For example, box spaces can be used to construct expanders \cite{Mar}, as well as to construct groups without property $A$ \cite{Gro,AD,Osa}.

Of particular interest to us will be the so-called \emph{asymptotic dimension} of a box space. Defined by Gromov \cite{AsdimGro}, the asymptotic dimension is a coarse version of the topological dimension. There has been a fair amount of recent work on computing the asymptotic dimension of various box spaces, and this note represents a contribution to that body of work.

Given a finitely generated residually finite infinite group $G$, a \emph{filtration} of $G$ is a sequence $(N_n)_{n=1}^\infty$ of nested, normal, finite-index subgroups $N_n$ of $G$ such that $\bigcap_n N_n=\{e\}$.
\begin{definition}[box space]
The \emph{box space} $\Box_{(N_i)}G$ of a finitely generated residually finite infinite group $G$ with respect to a filtration $(N_n)$ of $G$ and a finite generating set $S$ of $G$ is the metric space on the disjoint union of the quotients $(G/N_n)_n$ in which the metric on each component $G/N_n$ is the Cayley-graph metric induced by $S$, and the distance between $x\in G/N_m$ and $y\in G/N_n$ with $m\ne n$ is defined to be the sum of the diameters of $G/N_m$ and $G/N_n$.
\end{definition}

Now let $X$ be a metric space. Given $R>0$, a family $\mathcal U$ of subsets of $X$ is said to be \emph{$R$-disjoint} if the distance between every pair of distinct sets in $\mathcal U$ is at least $R$. Given $R,S>0$, the \emph{$(R,S)$-dimension} $(R,S)\text{-}\dim(X)$ of $X$ is defined to be the least $n\in\Z$ such that there exist families $\mathcal{U}_0,\mathcal{U}_1,\ldots,\mathcal{U}_n$ of subsets of $X$ such that
\begin{itemize}
\item $\bigcup_{j=0}^n \mathcal{U}_j$ covers $X$,
\item $\mathcal{U}_j$ is $R$-disjoint for every $j \in \{0,1,...,n\}$, and
\item $\diam(U) \le S$ for every $U \in \mathcal{U}_j$ and $j \in \{0,1,...,n\}$.
\end{itemize}
\begin{definition}[asymptotic dimension]
The \emph{asymptotic dimension} $\asdim X$ of a metric space $X$ is defined to be the least $n\in\Z$ such that for every $R>0$ there exists $S>0$ such that $(R,S)\text{-}\dim(X)\le n$.
\end{definition}

It is known that a virtually polycyclic group with a Cayley-graph metric has finite asymptotic dimension. Indeed, given a virtually polycyclic group $G$ we write $h(G)$ for the \emph{Hirsch length} of $G$, which is to say the number of infinite factors in a normal polycyclic series of a finite-index polycyclic subgroup of $G$. Dranishnikov and Smith \cite[Theorem 3.5]{DSHirsh} show that if $G$ is residually finite and virtually polycyclic then
\begin{equation}\label{eq:hirsch}
\asdim G=h(G).
\end{equation}

It is not unreasonable to expect that \emph{box spaces} of virtually polycyclic groups should also have finite asymptotic dimension, and indeed there have been some results in this direction. For example, Szab\'o, Wu and Zacharias \cite{Gabor} show that every finitely generated virtually nilpotent group has \emph{some} box space with finite asymptotic dimension. Finn-Sell and Wu \cite{FinnWu} show moreover that for certain box spaces of virtually polycyclic groups the asymptotic dimension of the box space is, like that of the group itself, equal to the Hirsch length of the group. These results are not ideal in their current form, as they rely on certain subgroup inclusions inducing coarse embeddings of the corresponding box spaces, a fact that doesn’t hold in general due to \cite[Theorem 4.9]{Fund}.


The main purpose of the present note is to clarify the situation and strengthen these results in the case of virtually nilpotent groups. Indeed, we show that if $G$ is a finitely generated virtually nilpotent group then in fact \emph{every} box space of $G$ has asymptotic dimension equal to the Hirsch length of $G$, as follows.

\begin{theorem}\label{thm:hirsch}
Let $G$ be a finitely generated residually finite virtually nilpotent group and let $(N_n)_n$ be a filtration of $G$. Then $\asdim \Box_{(N_n)}G = h(G)$.
\end{theorem}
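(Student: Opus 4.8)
The lower bound is the easy half. Since $\bigcap_nN_n=\{e\}$ and balls in $G$ are finite, for every $r$ there is an $n$ with $N_n\cap B_G(e,2r)=\{e\}$, so the quotient map $G\to G/N_n$ restricts to an isometry from $B_G(e,r)$ onto its image; thus $\Box_{(N_n)}G$ contains isometric copies of arbitrarily large balls of $G$, whence $\asdim\Box_{(N_n)}G\ge\asdim G=h(G)$ by \eqref{eq:hirsch} (a finitely generated virtually nilpotent group being virtually polycyclic). The rest of the plan concerns the reverse inequality.

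First I would reduce to the torsion-free nilpotent case. If $H\le G$ has finite index then, uniformly in $n$, the finite graph $G/N_n$ is quasi-isometric to its finite-index subgroup $HN_n/N_n\cong H/(H\cap N_n)$ (the inclusion $H\le G$ being a quasi-isometry, compatibly with passage to quotients), so $\Box_{(N_n)}G$ is coarsely equivalent to $\Box_{(N_n\cap H)}H$. As asymptotic dimension is a coarse invariant, $h(H)=h(G)$, and a finitely generated virtually nilpotent group has a finite-index torsion-free nilpotent subgroup, we may assume $G$ is torsion-free nilpotent, say of class $c$, and argue by induction on $h=h(G)$. The base case $h=1$ is $G\cong\Z$, where $\Box_{(N_n)}G$ is a disjoint union of cycles of unbounded length, of asymptotic dimension $1$ by a direct computation (each cycle admits, at scale $R$, a cover by two colour classes of $R$-separated arcs of length $O(R)$, the wrap-around defect being absorbed into a neighbouring arc). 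When $h\ge2$ and $c\ge2$ put $Z:=\gamma_c(G)\cong\Z^d$ ($d\ge1$), which is central; if $c=1$ (so $G$ is abelian) take $Z$ to be any $\Z$-summand instead.

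For the inductive step, set $Q:=G/Z$ (so $h(Q)=h-d<h$) and $A_n:=N_n\cap Z$; then $(A_n)$ is a filtration of $Z\cong\Z^d$, while the images $N_nZ/Z$ in $Q$, though not necessarily separating, descend to a genuine filtration of $Q/K$ for $K:=\bigcap_nN_nZ/Z$, whose associated box space has components $G/N_nZ$. The projections $G/N_n\to G/N_nZ$ assemble to a $1$-Lipschitz map $p\colon\Box_{(N_n)}G\to\Box(Q/K)$, whose target has asymptotic dimension $\le h(Q/K)\le h-d$ by the inductive hypothesis (applied to the finitely generated residually finite virtually nilpotent group $Q/K$), and whose ``fibres'' are the cosets of $N_nZ/N_n\cong Z/A_n$. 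By the Bell--Dranishnikov Hurewicz-type theorem for asymptotic dimension, it then suffices to bound, uniformly, the asymptotic dimension of preimages of $R$-balls under $p$ by $d$. Such a preimage lies in a single component $G/N_n$ and, the central extension $Z\to p^{-1}(\text{ball})\to(\text{ball})$ being coarsely trivial over a ball (its cocycle is bounded on bounded sets), is uniformly coarsely a product of a bounded set with the fibre $(Z/A_n,d_n)$, where $d_n$ is the metric induced from $G/N_n$. Everything thus comes down to: the family $\{(Z/A_n,d_n)\}_n$ has asymptotic dimension $\le d$, uniformly.

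This last point is the heart of the matter and the main obstacle. The Cayley metrics of the $Z/A_n$ form the box space $\Box_{(A_n)}\Z^d$, of asymptotic dimension $d$ by the abelian case of the induction, and since the central subgroup $Z=\gamma_c(G)$ is polynomially distorted of degree $c$ in $G$ we have $d_n\le(\text{Cayley metric of }Z/A_n)^{1/c}$. Taking a $c$-th root does not increase asymptotic dimension — a cover witnessing $(R^c,S)\text{-}\dim\le d$ of $\Box_{(A_n)}\Z^d$ witnesses $(R,S^{1/c})\text{-}\dim\le d$ of the ``rooted'' family — so it would be enough to bound $d_n$ \emph{below} by a uniform root of the Cayley metric, i.e.\ to show that the inclusion $\Box_{(A_n)}Z\hookrightarrow\Box_{(N_n)}G$ is a uniform coarse embedding. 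This is precisely the kind of statement that fails for general subgroup inclusions, cf.\ \cite[Theorem 4.9]{Fund}: a priori $d_n$ could be far smaller than any root of the Cayley metric, namely whenever some $N_n$ contains an element that is short in $G/Z$ but displaces the central direction $Z$ a long way. The plan for closing this gap, which I expect to be the technical core, is to show that such a collapse cannot occur uniformly along a \emph{filtration}: a persistent failure of the coarse-embedding estimate would, via a diagonal/compactness argument exploiting the centrality of $Z$, the controlled distortion, and the nesting of the $N_n$, produce a nontrivial element of $\bigcap_mN_m$, contradicting separation. (In the abelian base case $Z$ is undistorted, so once this is known $d_n$ is uniformly bi-Lipschitz to the Cayley metric of $Z/A_n$, and the same fibration reduces $\Box_{(A_n)}\Z^d$ to $\Box\Z$, i.e.\ to disjoint cycles; this is why the abelian case is the base of the induction.) Finally, the finitely many components of bounded diameter are handled trivially, and the uniform estimates on the remaining components assemble — distinct components being far apart in the box-space metric — to give $\asdim\Box_{(N_n)}G\le h$, completing the induction.
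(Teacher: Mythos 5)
Your lower bound is fine and is essentially the paper's: the box space contains isometric copies of arbitrarily large balls of $G$, and hence $\asdim\Box_{(N_n)}G\ge\asdim G=h(G)$. (The one small point you elide is that ``contains all balls of $G$ isometrically'' does not instantly give $\asdim\ge\asdim G$; one needs the diagonal argument of \cref{lem:cdu.balls}, which extracts from covers of the individual balls a single cover of $G$.) The upper bound, however, is where your plan breaks, and it breaks at exactly the point the paper warns about. Your argument leans twice on subgroup inclusions inducing (uniformly) controlled maps of box spaces: once in the reduction to the torsion-free nilpotent case, where you assert that $\Box_{(N_n)}G$ is coarsely equivalent to $\Box_{(N_n\cap H)}H$ for a finite-index subgroup $H$, and once in the central step, where you need the inclusion $\Box_{(A_n)}Z\hookrightarrow\Box_{(N_n)}G$ to be a uniform coarse embedding. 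Neither follows from the corresponding statement for the groups themselves: an element of $N_n$ that is short in $G$ need not certify closeness inside $H/(H\cap N_n)$ or $Z/A_n$, and \cite[Theorem 4.9]{Fund} shows such coarse embeddings genuinely fail for box spaces in general. This failure is precisely the stated reason the earlier results of Szab\'o--Wu--Zacharias and Finn-Sell--Wu cover only \emph{some} box spaces rather than all of them. You acknowledge the second gap and propose to close it by a diagonal/compactness argument exploiting centrality and the filtration property, but that argument is not carried out, and it is the entire technical content of the approach; the first gap you do not flag at all.

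The paper sidesteps this circle of ideas completely, and it is worth seeing how. It never compares one box space to another. Instead, \cref{finite} shows directly, via a doubling/covering argument in the style of Gromov and Ruzsa, that a coarse disjoint union of finite groups of uniformly polynomial growth of degree $d$ has asymptotic dimension at most $4^d$; hence the \emph{full} box space $\Box_fG$ of a virtually nilpotent $G$, and a fortiori every box space of $G$, has finite asymptotic dimension (\cref{cor:full.box}). Then a Yamauchi-style argument (\cref{boxSplit}) shows that any box space of finite asymptotic dimension has asymptotic dimension exactly $\asdim G$: finiteness supplies, for each $k$, bounded covers of multiplicity $m+1$ of the tail components, and since far-out components are isometric to balls of $G$ on the relevant scales, these covers have pieces uniformly of $\asdim\le\asdim G$, so \cref{lem:unif.disjoint} and \cref{lem:fut} give $\asdim\Box_{(N_n)}G\le\asdim G$. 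Combining with \eqref{eq:hirsch} finishes the proof. In short: the crude quantitative bound $4^d$ plus the dichotomy of \cref{boxSplit} replaces the entire induction on Hirsch length, and no coarse-embedding statement between box spaces is ever needed.
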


The note is organised as follows. In \cref{sec:background} we present some basic facts about box spaces and about asymptotic dimension; in \cref{sec:arb} we compute the asymptotic dimension of certain box spaces in terms of the asymptotic dimensions of the groups they are constructed from; and then finally, in \cref{sec:poly}, we bound the asymptotic dimension of box spaces of groups of polynomial growth in terms of the growth rate and deduce \cref{thm:hirsch}.

\bigskip
\noindent\textsc{Acknowledgements.} The authors are grateful to Ana Khukhro, Alain Valette and Rufus Willett for helpful conversations; to Martin Finn-Sell for making available to us an as-yet unpublished version of \cite{FinnWu}; and to Alain Valette and two anonymous referees for careful readings of the manuscript and a number of helpful comments.

\section{Background}\label{sec:background}

In this section we collect together various results about box spaces and asymptotic dimension.
Given a group $G$ with a fixed finite generating set $S$ we write $B_G(x,R)$ for the ball of radius $R$ about the element $x\in G$ in the Cayley graph $\Cay(G,S)$.

\bigskip
\noindent\textsc{Coarse disjoint unions and box spaces.} Given a sequence $(X_n)_{n=1}^\infty$ of sets we write $\bigsqcup_{n}X_n$ for their disjoint union. If $(X_n,d_n)$ and $(\bigsqcup_{n}X_n,d)$ are all metric spaces then $\bigsqcup_{n}X_n$ is said to be a \emph{coarse disjoint union} of the $X_n$ if
\begin{enumerate}
\item for each $n$ the metric $d$ restricts to $d_n$ on $X_n$,
\item whenever $m\ne n$ the distance between $X_m$ and $X_n$ is at least $\diam X_m+\diam X_n$, and
\item for every $r$ the number of pairs $m,n$ with $d(X_m,X_n)<r$ is finite.
\end{enumerate}
Note that if $\diam X_i\to \infty$ then property (2) of this definition implies property (3).

\begin{remark*}
The exact distance between the components $X_n$ of a coarse disjoint union does not change the coarse equivalence class. Since asymptotic dimension is a coarse invariant \cite[Proposition 22]{asdimAlt}, neither does it change the asymptotic dimension.
\end{remark*}

In the introduction we defined a box space $\Box_{(N_i)}G$ of a finitely generated residually finite group $G$ with respect to a filtration $(N_n)$ of $G$ and a finite generating set $S$ of $G$ to be the disjoint union of the quotients $(G/N_n)_n$ with the metric that restricts to the Cayley-graph metric induced by $S$ on each $G/N_n$, and such that for $x\in G/N_m$ and $y\in G/N_n$ with $m\ne n$ the distance between $x$ and $y$ is precisely $\diam(G/N_m)+\diam(G/N_n)$. Note in particular that $\Box_{(N_i)}G$ is a coarse disjoint union of the $G/N_n$.

We also define the \emph{full box space} $\Box_f G$ of $G$, setting it to be the disjoint union of \emph{all} finite quotients of $G$ with the metric that restricts to the Cayley-graph metric induced by $S$ on each quotient, and such that for $x\in G/N$ and $y\in G/N'$ with $N\ne N'$ the distance between $x$ and $y$ is precisely $\diam(G/N)+\diam(G/N')$.

The following standard result says that the components of a box space locally `look like' the group.

\begin{proposition}\label{isoballs}
Let $G$ be a residually finite, finitely generated group and let $(N_i)_i$ be a filtration of $G$. Then there exists an increasing sequence $(i_k)_k$ such that for every $k\in\N$ the balls of radius $k$ of $G$ are isometric to the balls of radius $k$ of $G/N_i$, where $i\ge i_k$.
\end{proposition}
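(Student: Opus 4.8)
The plan is to exploit the fact that the quotient homomorphism $\pi_i\colon G\to G/N_i$ carries the fixed generating set $S$ onto a generating set of $G/N_i$, so that it induces a graph morphism $\Cay(G,S)\to\Cay(G/N_i,S)$ and is in particular $1$-Lipschitz. Thus $\pi_i$ never increases distances, and the whole content of the proposition is that, for $i$ large enough in terms of $k$, the map $\pi_i$ does not \emph{strictly} decrease distances on the ball $B_G(e,k)$ and restricts to a bijection from $B_G(e,k)$ onto $B_{G/N_i}(\bar e,k)$.

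First I would reduce this metric statement to a purely combinatorial one about injectivity on a slightly larger ball. Suppose $g,h\in B_G(e,k)$ satisfy $\ell:=d_{G/N_i}(\pi_i g,\pi_i h)<d_G(g,h)$. Lifting a geodesic from $\pi_i g$ to $\pi_i h$ in $\Cay(G/N_i,S)$, starting from $g$, yields an element $g'\in G$ with $\pi_i g'=\pi_i h$ and $d_G(g,g')\le\ell$; since $d_G(g,h)>\ell$ we have $g'\ne h$, while $g',h\in B_G(e,3k)$ (as $\ell\le d_G(g,h)\le 2k$) and $h^{-1}g'\in N_i\setminus\{e\}$. Hence it suffices to choose $i$ so that $\pi_i$ is injective on $B_G(e,3k)$: this rules out the configuration above, and, together with the fact that every element of $B_{G/N_i}(\bar e,k)$ is the $\pi_i$-image of a word of length $\le k$ over $S$ (hence of an element of $B_G(e,k)$), shows that $\pi_i$ maps $B_G(e,k)$ bijectively and isometrically onto $B_{G/N_i}(\bar e,k)$.

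To secure this injectivity I would use that $G$ is finitely generated, so $B_G(e,3k)$ is finite, whence $F_k:=\{x^{-1}y:x,y\in B_G(e,3k),\ x\ne y\}$ is a finite subset of $G\setminus\{e\}$. Since $\bigcap_i N_i=\{e\}$, for each $f\in F_k$ there is an index $i(f)$ with $f\notin N_{i(f)}$; as the $N_i$ are nested, $f\notin N_i$ for every $i\ge i(f)$. Taking $i_k=\max_{f\in F_k}i(f)$ then makes $\pi_i$ injective on $B_G(e,3k)$ for all $i\ge i_k$. Finally, since $F_k\subseteq F_{k+1}$ the sequence $(i_k)$ is already non-decreasing, and replacing $i_k$ by $\max(i_k,i_{k-1}+1)$ if one insists on strict monotonicity changes nothing.

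The only slightly delicate point is the path-lifting and radius bookkeeping in the second paragraph: one must check that the lifted endpoint $g'$ genuinely lands in the finite ball on which injectivity has been arranged, and that the resulting map is an honest isometry rather than merely a bijection that is $1$-Lipschitz in one direction. Everything else is a direct consequence of finite generation together with $\bigcap_i N_i=\{e\}$.
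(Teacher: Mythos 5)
Your proof is correct and takes essentially the same route as the paper's one-line argument: choose $i$ large enough that $N_i$ meets a suitable finite ball of $G$ only in the identity, so that the quotient map restricts to a surjective isometry on $B_G(e,k)$. If anything your bookkeeping is more careful than the paper's, which asserts that $B_{G}(e,2k)\cap N_i=\{1\}$ already suffices --- radius $2k$ is in fact too small (consider $\Z\to\Z/(2k+1)$ with the standard generator, where $k$ and $-k$ lie at distance $2k$ in $\Z$ but at distance $1$ in the quotient), whereas your requirement that $\pi_i$ be injective on $B_G(e,3k)$, equivalently $N_i\cap B_G(e,6k)=\{e\}$, together with the geodesic-lifting step, genuinely establishes the isometry.
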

\begin{proof}
For a given $k$ and large enough $i$ we have $B_{G}(e,2k)\cap N_i=\{1\}$, which in turn implies that $B_{G}(e,k)$ is isometric to $B_{G/N_i}(e,k)$.
\end{proof}

\noindent\textsc{Asymptotic dimension.} There exist various equivalent alternative definitions of asymptotic dimension (see \cite[Theorem 19]{asdimAlt}, for example), and it will be useful for us to record one of these alternatives. First, given $r>0$ and a family $\mathcal U$ of subsets of a metric space $X$, we define the \emph{$r$-multiplicity} of $\mathcal U$ to be equal to
\[
\max_{x\in X}|\{U\in\mathcal U : U\cap B(x,r) \neq \varnothing\}|.
\]
Then we have the following result.
\begin{proposition}[{\cite[Theorem 19]{asdimAlt}}]\label{def:asdim}
Let $X$ be a metric space. Then $\asdim(X)\le n$ if and only if for every $R>0$ there exists $S>0$ and a covering $\mathcal{U}$ of $X$ such that $\mathcal{U}$ has $R$-multiplicity at most $n+1$ and $\diam(U)\le S$ for every $U\in\mathcal{U}$.
\end{proposition}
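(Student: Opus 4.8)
The statement is an equivalence; the plan is to prove the two implications separately. Recall that, by definition, $\asdim(X)\le n$ means that for every $R>0$ there exist an $S>0$ and families $\mathcal U_0,\dots,\mathcal U_n$ covering $X$, each $R$-disjoint, all of whose members have diameter at most $S$. The implication ``$\asdim(X)\le n\Rightarrow$ the multiplicity condition'' is the easy one: given $R>0$, apply the definition of asymptotic dimension at scale $3R$ to obtain $S>0$ and $3R$-disjoint families $\mathcal U_0,\dots,\mathcal U_n$ covering $X$ with all diameters at most $S$, and set $\mathcal U=\bigcup_{j=0}^n\mathcal U_j$. Then $\diam(U)\le S$ for every $U\in\mathcal U$, and for each $x\in X$ and each $j$ at most one member of $\mathcal U_j$ meets $B(x,R)$ --- two distinct such members would contain points at mutual distance at most $2R<3R$, contradicting $3R$-disjointness --- so at most $n+1$ members of $\mathcal U$ meet $B(x,R)$, i.e.\ $\mathcal U$ has $R$-multiplicity at most $n+1$.

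For the converse, fix $R>0$; we must produce $S>0$ and $R$-disjoint families $\mathcal U_0,\dots,\mathcal U_n$ covering $X$ with all diameters at most $S$. The naive approach --- take a cover $\mathcal U$ supplied by the hypothesis and partition it into $n+1$ subfamilies, each $R$-disjoint, by an $(n+1)$-colouring of the graph on $\mathcal U$ joining sets at distance $<R$ --- fails on its own, because the number of members of $\mathcal U$ lying within distance $R$ of a fixed one is governed by the $(R+S')$-multiplicity of $\mathcal U$, where $S'$ is its diameter bound, rather than by its $R$-multiplicity, and $S'$ itself depends on the scale one feeds into the hypothesis, so the estimate is circular. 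Instead I would reduce to the Lebesgue-number formulation of asymptotic dimension. Apply the hypothesis at a scale $\rho$, to be chosen large in terms of $R$, to get a cover $\mathcal U$ with $\rho$-multiplicity at most $n+1$ and all diameters at most $D$, and pass to the cover $\mathcal V$ by the $\rho$-neighbourhoods $N_\rho(U)=\{x\in X:d(x,U)<\rho\}$ of the sets $U\in\mathcal U$. Then $\mathcal V$ is uniformly bounded, with diameters at most $D+2\rho$; it has \emph{ordinary} multiplicity at most $n+1$, since a point lying in $m+1$ distinct members of $\mathcal V$ lies within $\rho$ of $m+1$ distinct members of $\mathcal U$, so that a single ball $B(x,\rho)$ meets $m+1$ members of $\mathcal U$, forcing $m+1\le n+1$; and it has Lebesgue number at least $\rho$, since $\mathcal U$ covers $X$, so each $x$ lies in some $U\in\mathcal U$ and hence $B(x,\rho)\subseteq N_\rho(U)$.

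It then remains to invoke, or re-prove, the classical equivalence (in the spirit of Bell--Dranishnikov) between $\asdim(X)\le n$ and the existence, for every $\rho>0$, of a uniformly bounded cover of $X$ with Lebesgue number at least $\rho$ and multiplicity at most $n+1$. The direction we need --- passing from such a cover back to $n+1$ $R$-disjoint families of uniformly bounded diameter --- is carried out by shrinking each member of $\mathcal V$ by a fixed fraction of its Lebesgue number and decomposing the shrunk cover according to the dimensions of the simplices of its nerve (equivalently, pulling back to $X$, via a partition of unity, the $n+1$ disjoint families of open stars of the equal-dimensional vertices of the barycentric subdivision of the nerve), after which one checks that, for $\rho$ large enough relative to $R$, the $n+1$ resulting families are $R$-disjoint with a common diameter bound $S$. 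The main obstacle is precisely this last reduction: small multiplicity at one scale does not control multiplicity at larger scales, so the given cover cannot be recoloured directly but must first be thickened to neighbourhoods so that its Lebesgue number can be exploited; once that is done, the nerve/barycentric-subdivision argument is routine, and the rest is bookkeeping with the scales $R$, $\rho$, $D$, $S$.
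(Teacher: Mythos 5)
The paper offers no proof of this proposition: it is quoted directly as Theorem 19 of the cited survey on asymptotic dimension, so there is no in-paper argument to compare yours against, and your proposal is best judged as a reconstruction of the standard proof from that source --- which, in outline, it is. The forward direction is complete and correct: applying the definition at scale $3R$ and noting that two sets of a $3R$-disjoint family cannot both meet $B(x,R)$ (any scale strictly greater than $2R$ would do). In the converse direction you correctly diagnose the genuine subtlety --- that $R$-multiplicity does not bound the chromatic number of the proximity graph of the cover, so a direct recolouring is circular --- and your remedy, replacing each $U$ by its $\rho$-neighbourhood to obtain a uniformly bounded cover with ordinary multiplicity at most $n+1$ and Lebesgue number at least $\rho$, is exactly the standard reduction and is carried out correctly. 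The one incomplete step is the last one: passing from such a cover to $n+1$ $R$-disjoint uniformly bounded families is the technical heart of the classical equivalence, and you describe the nerve/barycentric-subdivision (or shrinking) argument without executing it. As written, then, your proof is an accurate and well-motivated outline rather than a self-contained argument; the route is the right one, and filling in the deferred step (for instance by sorting points of $X$ according to which sets $V$ maximise $d(\cdot,X\setminus V)$, or by pulling back the colour classes of vertices of the barycentric subdivision of the nerve) is standard, but it does involve real work with the scales $R$, $\rho$, $D$ that you have only gestured at.
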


\begin{lemma}[{\cite[Finite Union Theorem]{Asdim}}]\label{lem:fut}
Let $X$ be a metric space and let $X_1,\ldots,X_n$ be a finite partition of  $X$. Then $\asdim X = \max\left(\asdim X_i\right)$.
\end{lemma}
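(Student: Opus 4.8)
The plan is to prove the two inequalities $\asdim X\ge\max_i\asdim X_i$ and $\asdim X\le\max_i\asdim X_i$ separately. The first is routine monotonicity of asymptotic dimension under subspaces: given families $\mathcal U_0,\dots,\mathcal U_m$ witnessing $(R,S)\text{-}\dim(X)\le m$, their restrictions $\{U\cap Y:U\in\mathcal U_j\}$ to a subspace $Y$ witness $(R,S)\text{-}\dim(Y)\le m$, since passing to the subspace metric preserves both $R$-disjointness and the diameter bound; hence $\asdim X_i\le\asdim X$ for each $i$, and so $\max_i\asdim X_i\le\asdim X$. For the reverse inequality I would write $n=\max_i\asdim X_i$, which I may assume finite, and reduce by an obvious induction on the number of parts to the case $X=A\cup B$ with $\asdim A\le n$ and $\asdim B\le n$; the goal becomes $\asdim X\le n$.

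So fix $R>0$. First I would apply $\asdim B\le n$ at scale $R$ to obtain $S_B>0$ and a cover $\mathcal B=\mathcal B_0\cup\cdots\cup\mathcal B_n$ of $B$ with each $\mathcal B_i$ being $R$-disjoint and all members of diameter $\le S_B$. Then I would apply $\asdim A\le n$ at the much larger scale $\rho:=2S_B+3R$ to obtain $S_A>0$ and a cover $\mathcal A=\mathcal A_0\cup\cdots\cup\mathcal A_n$ of $A$ with each $\mathcal A_i$ being $\rho$-disjoint and members of diameter $\le S_A$ (there is no circularity: $S_B$ depends only on $R$, so $\rho$ and hence $S_A$ do too). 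Next I would enlarge the pieces of $\mathcal A$ by absorbing the pieces of $\mathcal B$ of the \emph{same colour} lying within $R$ of them: for $A_i^\alpha\in\mathcal A_i$ put
\[
\widehat{A_i^\alpha}=A_i^\alpha\cup\bigcup\{\,B\in\mathcal B_i:d(B,A_i^\alpha)\le R\,\},
\]
and let $\mathcal B_i^{\mathrm{far}}$ collect those $B\in\mathcal B_i$ with $d(B,\bigcup\mathcal A_i)>R$. The families I would propose are $\mathcal U_i=\{\widehat{A_i^\alpha}:A_i^\alpha\in\mathcal A_i\}\cup\mathcal B_i^{\mathrm{far}}$ for $i=0,\dots,n$.

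Coverage would be immediate: $A$ is covered by the $\widehat{A_i^\alpha}$, and each piece of $\mathcal B_i$ either lies within $R$ of $\bigcup\mathcal A_i$ (hence is absorbed) or lies in $\mathcal B_i^{\mathrm{far}}$. For diameters, an absorbed piece sits within distance $R+S_B$ of its core $A_i^\alpha$, so $S:=S_A+2S_B+2R$ bounds every diameter. The real work is the $R$-disjointness of each $\mathcal U_i$, which I would split into three cases: (a) two enlarged pieces — each is within $R+S_B$ of its core and the cores are $\rho$-apart, so they are at distance $\ge\rho-2(R+S_B)=R$ (the same inequality shows no $\mathcal B_i$-piece can be absorbed into two cores, so the construction is unambiguous); (b) two pieces of $\mathcal B_i^{\mathrm{far}}$ — these are $\ge R$ apart since $\mathcal B_i$ is $R$-disjoint; and (c) an enlarged piece $\widehat{A_i^\alpha}$ against a retained piece $B\in\mathcal B_i^{\mathrm{far}}$. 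Case (c) is the step I expect to be the main obstacle, and it is what dictates the whole construction: $B$ is at distance $>R$ from $\bigcup\mathcal A_i\supseteq A_i^\alpha$, while every $\mathcal B_i$-piece absorbed into $\widehat{A_i^\alpha}$ shares $B$'s colour and is therefore $\ge R$ from $B$ by $R$-disjointness of $\mathcal B_i$; hence $d(B,\widehat{A_i^\alpha})\ge R$. This is precisely why the absorption must remain within a single colour — a differently-coloured absorbed piece could sit arbitrarily close to $B$ — and why $\mathcal B_i^{\mathrm{far}}$ is defined relative to $\bigcup\mathcal A_i$ rather than to all of $A$; the large scale $\rho$ is needed only for case (a) and the well-definedness remark. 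With $R>0$ arbitrary this yields $\asdim X\le n$, and the general partition follows by induction.
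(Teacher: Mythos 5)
The paper does not actually prove this lemma—it quotes it as the Finite Union Theorem from the cited reference—so there is no in-paper argument to compare against; your proof is correct and is essentially the standard proof of that theorem: subspace monotonicity for the lower bound, and for the upper bound the two-set case obtained by covering $B$ at scale $R$, covering $A$ at the inflated scale $2S_B+3R$, and absorbing into each $A$-piece the nearby $B$-pieces of the same colour. The one cosmetic slip is that $d(B',\bigcup\mathcal A_i)>R$ is strictly stronger than ``$B'$ is absorbed by no core'' (the infimum over infinitely many cores need not be attained), so a piece could be neither absorbed nor in your $\mathcal B_i^{\mathrm{far}}$ and coverage would fail; defining $\mathcal B_i^{\mathrm{far}}$ simply as the set of unabsorbed pieces of $\mathcal B_i$ repairs this, and your case (c) goes through verbatim with that definition.
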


Let $\mathcal U$ be a family of metric spaces. We say that $\mathcal U$ has asymptotic dimension at most $n$ \emph{uniformly}, and write $\asdim\mathcal U\le_\unif n$, if for every $R>0$ there exists $S>0$ such that $(R,S)\text{-}\dim X\le n$ for every $X\in\mathcal U$. This definition is particularly useful to us in light of the following result.
\begin{lemma}[{\cite[Theorem 1]{Asdim}}]\label{lem:unif.disjoint}
Let $X$ be a metric space, and let $\mathcal U$ be a family of subspaces that covers $X$. Suppose that $\asdim\mathcal U\le_\unif n$, and that for every $k\in\N$ there exists $F_k\subset X$ with $\asdim F_k\le n$ such that the family $\{Y\backslash F_k:Y\in\mathcal U\}$ is $k$-disjoint. Then $\asdim X\le n$.
\end{lemma}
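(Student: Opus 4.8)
The plan is to verify the definition of $\asdim X\le n$ directly. Fix $R>0$; I must produce $S^*>0$ and families $\mathcal T_0,\dots,\mathcal T_n$ covering $X$, each $R$-disjoint, with every member of diameter at most $S^*$. Using $\asdim\mathcal U\le_\unif n$, first fix $S>0$ so that every $Y\in\mathcal U$ carries families $\mathcal U_0^Y,\dots,\mathcal U_n^Y$ covering $Y$, each $R$-disjoint, with members of diameter at most $S$. Then apply the hypothesis with $k=R$ to get $F:=F_R$ with $\asdim F\le n$ and $\{Y\setminus F:Y\in\mathcal U\}$ $R$-disjoint.

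Next I would isolate the part of $X$ seen by sets avoiding $F$. For each $j$ put $\mathcal V_j=\bigcup_{Y\in\mathcal U}\{U\in\mathcal U_j^Y:U\cap F=\varnothing\}$. Members of $\mathcal V_j$ coming from a single $Y$ are $R$-disjoint since $\mathcal U_j^Y$ is, and members coming from distinct $Y,Y'$ lie in $Y\setminus F$ and $Y'\setminus F$ respectively and so are $R$-disjoint by the choice of $F$; thus each $\mathcal V_j$ is $R$-disjoint with mesh at most $S$. A point $x\in X$ covered by no member of $\bigcup_j\mathcal V_j$ lies in some $Y$, hence in some $U\in\mathcal U_0^Y$, and this $U$ must meet $F$ (else $U\in\mathcal V_0$ would cover $x$), so $d(x,F)\le S$. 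Hence $Z:=X\setminus\bigcup_j\bigcup\mathcal V_j$ is contained in the $S$-neighbourhood $N_S(F)$ of $F$. Since $N_S(F)$ is coarsely equivalent to $F$, coarse invariance of $\asdim$ gives $\asdim N_S(F)=\asdim F\le n$, whence $\asdim Z\le n$ (using \cref{lem:fut}). So for the value $3R+2S$ there exist $S''>0$ and families $\mathcal W_0,\dots,\mathcal W_n$ covering $Z$, each $(3R+2S)$-disjoint, with mesh at most $S''$.

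The last step is to merge $\mathcal V_j$ with $\mathcal W_j$ into a single $R$-disjoint family, and this is the only genuinely delicate point: a member of $\mathcal V_j$ and a member of $\mathcal W_j$, although disjoint as sets, can lie arbitrarily close to one another across the ``boundary'' of $Z$, so $\mathcal V_j\cup\mathcal W_j$ need not be $R$-disjoint. I would handle this by the usual absorption device: enlarge each $W\in\mathcal W_j$ to $\widehat W=W\cup\bigcup\{U\in\mathcal V_j:d(U,W)<R\}$ and set $\mathcal T_j=\{\widehat W:W\in\mathcal W_j\}\cup\{U\in\mathcal V_j:d(U,W)\ge R\text{ for all }W\in\mathcal W_j\}$. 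Because $\mathcal W_j$ is $(3R+2S)$-disjoint and members of $\mathcal V_j$ have diameter at most $S$, no member of $\mathcal V_j$ lies within $R$ of two distinct members of $\mathcal W_j$, so $\widehat W\subseteq N_{R+S}(W)$; a routine estimate then shows that the $\widehat W$ remain $R$-disjoint and have diameter at most $S''+2(R+S)$, and that the leftover members of $\mathcal V_j$ are $R$-disjoint both from each other and from every $\widehat W$. Thus each $\mathcal T_j$ is $R$-disjoint with mesh at most $\max(S,S''+2R+2S)$, while $\bigcup_j\mathcal T_j$ covers $X$ since the $\mathcal W_j$ cover $Z$ and the $\mathcal V_j$ cover $X\setminus Z$. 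Taking $S^*=\max(S,S''+2R+2S)$ completes the verification; the main obstacle is exactly the boundary issue in this final merging step, the rest being bookkeeping.
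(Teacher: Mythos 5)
Your proof is correct. The paper does not prove this lemma itself but quotes it from the cited reference; your argument --- covering each $Y\in\mathcal U$ uniformly, discarding the sets meeting $F_R$, observing that the uncovered remainder lies in an $S$-neighbourhood of $F_R$ (hence has $\asdim\le n$), and then absorbing nearby small sets into a widely-spaced cover of that neighbourhood --- is essentially the standard ``saturated union'' proof of the Bell--Dranishnikov union theorem given in that source, with the absorption step (the one genuinely delicate point, as you say) handled correctly via the $(3R+2S)$-disjointness of the $\mathcal W_j$.
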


\begin{corollary}\label{cor:split}
If $X$ is a coarse disjoint union of metric spaces $(X_n)_{n=1}^\infty$ then $\asdim(X_n)\le_\unif m$ if and only if $\asdim X\le m$.
\end{corollary}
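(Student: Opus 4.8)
The plan is to prove the two implications separately: the converse (a bound on $\asdim X$ gives uniform control of the components) is a routine restriction argument, while the forward implication is essentially a direct application of \cref{lem:unif.disjoint}.

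For the converse, suppose $\asdim X\le m$ and fix $R>0$. By definition there exist $S>0$ and families $\mathcal U_0,\dots,\mathcal U_m$ of subsets of $X$ witnessing $(R,S)\text{-}\dim(X)\le m$. For each $n$ and each $j$ I would set $\mathcal U_j^{(n)}=\{U\cap X_n:U\in\mathcal U_j\}\setminus\{\varnothing\}$. The union over $j$ of these families covers $X_n$; every member has diameter at most $S$; and each $\mathcal U_j^{(n)}$ is $R$-disjoint, since replacing sets by subsets cannot decrease the distance between them. Hence $(R,S)\text{-}\dim(X_n)\le m$ with a single value of $S$ serving all $n$, which is exactly $\asdim(X_n)\le_\unif m$.

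For the forward implication, suppose $\asdim(X_n)\le_\unif m$ and apply \cref{lem:unif.disjoint} with the covering family $\mathcal U=\{X_n:n\in\N\}$ of $X$ and with the dimension parameter of that lemma taken to be $m$. Its hypothesis $\asdim\mathcal U\le_\unif m$ is precisely our assumption, so the task reduces to producing, for each $k\in\N$, a set $F_k\subseteq X$ with $\asdim F_k\le m$ such that $\{X_n\setminus F_k:n\in\N\}$ is $k$-disjoint. Here I would invoke property (3) of the coarse disjoint union: the set of pairs $(p,q)$ with $p\ne q$ and $d(X_p,X_q)<k$ is finite, so the set $I_k$ of indices appearing in some such pair is finite. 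Put $F_k=\bigcup_{i\in I_k}X_i$. This is a finite union of subspaces each of asymptotic dimension at most $m$ by hypothesis, so the Finite Union Theorem (\cref{lem:fut}) gives $\asdim F_k\le m$. If $p\ne q$ with $p,q\notin I_k$ then, since the $X_i$ are pairwise disjoint, $X_p\setminus F_k=X_p$ and $X_q\setminus F_k=X_q$, while $d(X_p,X_q)\ge k$ by the choice of $I_k$; and $X_i\setminus F_k=\varnothing$ for $i\in I_k$. So $\{X_n\setminus F_k:n\in\N\}$ is $k$-disjoint, and \cref{lem:unif.disjoint} yields $\asdim X\le m$.

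I do not expect a serious obstacle here: once \cref{lem:unif.disjoint} is granted, the corollary is little more than bookkeeping. The only place where the coarse-disjoint-union hypothesis is genuinely used — rather than just the fact that the $X_n$ cover $X$ — is the construction of the $F_k$, and there it follows immediately from property (3). The one minor point requiring a little care is the handling of the empty set, both when restricting covers in the converse and when certain components fall inside $F_k$ in the forward direction, but this is harmless.
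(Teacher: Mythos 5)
Your argument is correct and is exactly the intended derivation: the paper states this as an immediate consequence of \cref{lem:unif.disjoint} (taking $\mathcal U=\{X_n\}$ and $F_k$ the union of the finitely many components within distance $k$ of another, using property (3) of a coarse disjoint union), with the converse being the routine restriction argument that the paper packages as \cref{lem:unif.asdim}. No gaps; the handling of empty sets and of $\asdim F_k$ via the Finite Union Theorem is fine.
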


We also record the following trivial fact as a lemma for ease of later reference.
\begin{lemma}\label{lem:unif.asdim}
Let $X$ be a metric space, and let $\mathcal{U}$ be a family of metric spaces each of which is isometric to a subspace of $X$. Then $\asdim\mathcal{U}\le_\unif\asdim X$.
\end{lemma}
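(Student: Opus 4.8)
The plan is to unwind the definitions directly: since the $(R,S)\text{-}\dim$ quantity behaves monotonically under passing to subspaces and is invariant under isometries, a single value of $S$ — namely the one coming from the definition of $\asdim X$ — will serve simultaneously for every member of $\mathcal U$.

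First I would set $n=\asdim X$ and fix an arbitrary $R>0$. By the definition of asymptotic dimension there is an $S>0$ with $(R,S)\text{-}\dim X\le n$; fix accordingly families $\mathcal U_0,\dots,\mathcal U_n$ of subsets of $X$ that cover $X$, are each $R$-disjoint, and have all members of diameter at most $S$. Now, given $Y\in\mathcal U$, fix an isometric embedding $\iota\colon Y\to X$ and write $Y'=\iota(Y)$. For $0\le j\le n$ define $\mathcal V_j=\{\iota^{-1}(U\cap Y'):U\in\mathcal U_j\}$. Then $\bigcup_j\mathcal V_j$ covers $Y$ because $\bigcup_j\mathcal U_j$ covers $Y'$; each member $\iota^{-1}(U\cap Y')$ has diameter $\diam(U\cap Y')\le\diam U\le S$ since $\iota$ is an isometry; and for distinct sets in $\mathcal V_j$ coming from $U\ne U'$ in $\mathcal U_j$ we have $d(\iota^{-1}(U\cap Y'),\iota^{-1}(U'\cap Y'))=d(U\cap Y',U'\cap Y')\ge d(U,U')\ge R$, again using that $\iota$ is an isometry and that shrinking two subsets cannot decrease the distance between them. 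Hence $\mathcal V_0,\dots,\mathcal V_n$ witness $(R,S)\text{-}\dim Y\le n$, and since $Y\in\mathcal U$ was arbitrary while $S$ depended only on $R$, we conclude $\asdim\mathcal U\le_\unif n=\asdim X$.

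There is essentially no obstacle here — this is the "trivial fact" advertised in the statement. The only two points needing (minimal) care are that the trace on a subspace of an $R$-disjoint family remains $R$-disjoint, which is exactly the elementary observation that $A'\subseteq A$ and $B'\subseteq B$ force $d(A',B')\ge d(A,B)$; and that $R$-disjointness, diameter, and hence $(R,S)\text{-}\dim$, are all preserved under the isometric embedding $\iota$. Degenerate cases are harmless: if $Y=\varnothing$ the empty cover already works, and if the traces of two distinct $U,U'$ happen to coincide they merely contribute a single set to $\mathcal V_j$, which imposes no $R$-disjointness constraint.
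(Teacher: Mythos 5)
Your argument is correct and is exactly the routine verification the paper omits (the lemma is stated without proof as a ``trivial fact''): restricting a cover witnessing $(R,S)\text{-}\dim X\le n$ to an isometric copy of $Y$ preserves covering, diameters, and $R$-disjointness, with the same $S$ working uniformly over all $Y\in\mathcal U$. Nothing further is needed.
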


The following result is presumably well known, although we could not find a reference.

\begin{lemma}\label{lem:cdu.balls}
Let $G$ be a finitely generated infinite group, and let $B$ be a coarse disjoint union of the balls $B_G(e,r)$ as $r$ ranges over the natural numbers. Then $\asdim B=\asdim G$.
\end{lemma}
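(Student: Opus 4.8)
The plan is to prove the two inequalities separately, the inequality $\asdim B\le\asdim G$ being essentially immediate from the machinery already assembled and the reverse inequality requiring a compactness argument. For the easy direction: each ball $B_G(e,r)$ is isometric to a subspace of $G$, so $\asdim\{B_G(e,r)\}_r\le_\unif\asdim G$ by \cref{lem:unif.asdim}; since $B$ is a coarse disjoint union of the balls $B_G(e,r)$, \cref{cor:split} then gives $\asdim B\le\asdim G$ (this is vacuous when $\asdim G=\infty$).

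For the reverse inequality we may assume $m:=\asdim B$ is finite. Fix $R>0$ and pick $S>0$ with $(R,S)\text{-}\dim(B)\le m$; restricting the witnessing families to each component $B_G(e,r)$, which carries the metric induced from $G$ by property (1) of a coarse disjoint union, shows $(R,S)\text{-}\dim(B_G(e,r))\le m$ for every $r$. I would then upgrade this to $(R,S)\text{-}\dim(G)\le m$, which, $R$ being arbitrary, yields $\asdim G\le m$. The tool is the elementary reformulation that, for any metric space $Y$, one has $(R,S)\text{-}\dim(Y)\le m$ if and only if there is a colouring $c\colon Y\to\{0,1,\dots,m\}$ such that every finite chain of points of a single colour class $c^{-1}(j)$ with consecutive distances less than $R$ spans a set of diameter at most $S$ (equivalently, every $R$-connected component of each $c^{-1}(j)$ has diameter at most $S$): in one direction one takes the $\mathcal{U}_j$ to be the $R$-components of $c^{-1}(j)$; in the other one sets $c(y)$ to be the least $j$ with $y\in\bigcup\mathcal{U}_j$, using that members of an $R$-disjoint family are pairwise disjoint. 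Such a colouring manifestly restricts to one of the same type on any subspace, since $R$-chains and diameters are computed in the common ambient metric.

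Since $G$ is finitely generated, each ball $B_G(e,r)$ is finite, so there are only finitely many colourings $B_G(e,r)\to\{0,\dots,m\}$, at least one of which is valid (i.e.\ witnesses $(R,S)\text{-}\dim(B_G(e,r))\le m$) by the previous paragraph, and the restriction of a valid colouring of $B_G(e,r+1)$ to $B_G(e,r)$ is again valid. Applying König's lemma to the tree of valid colourings of the balls under restriction — or, equivalently, taking an ultralimit of valid colourings along a non-principal ultrafilter on $\N$ — produces a single $c\colon G\to\{0,\dots,m\}$ whose restriction to every ball is valid. Finally, any two points of a given $R$-connected component of a colour class $c^{-1}(j)$ in $G$ are joined by a \emph{finite} chain with consecutive distances less than $R$, and this chain lies in some ball $B_G(e,r)$; validity of $c$ on that ball forces the two points to be within $S$ of each other, so $c$ is valid for $G$ and $(R,S)\text{-}\dim(G)\le m$, as required. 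The one place that needs care is precisely this passage between the balls and $G$: that restricting a valid colouring to a smaller ball stays valid is immediate, and that the assembled colouring is valid on all of $G$ works exactly because every witnessing $R$-chain is finite and hence captured by a single ball; the rest is bookkeeping.
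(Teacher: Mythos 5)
Your proof is correct and is, at its core, the same compactness argument as the paper's: both extract from the covers of the finite components a single $(m+1)$-colouring of $G$ whose colour classes have $R$-connected components of diameter at most $S$, and both ultimately rest on the fact that any witnessing $R$-chain is finite and hence captured by a single ball. The only difference is packaging: the paper realises the compactness as a nested diagonal subsequence extraction (pigeonholing, for each group element in turn, the index $j$ of the family $\mathcal{U}_j$ containing its copies in infinitely many components), whereas you make the colouring reformulation of $(R,S)$-dimension explicit and then invoke K\H{o}nig's lemma on the tree of valid colourings of the balls --- the two are interchangeable.
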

\begin{proof}
The fact that $\asdim B\le\asdim G$ follows from \cref{cor:split} and \cref{lem:unif.asdim}.

To prove that $\asdim G\le\asdim B$ it suffices to show that $(R,S)\text{-}\dim G\le(R,S)\text{-}\dim B$ for every $R,S>0$. To that end, fix $R,S>0$ and suppose that $(R,S)\text{-}\dim B=n\in\Z$, so that there exist $R$-disjoint families $\mathcal{U}_0,\mathcal{U}_1,\ldots,\mathcal{U}_n$ of subsets of $B$ that cover $B$ such that $\diam(U) \le S$ for every $U \in \mathcal{U}_j$ and $j\in\{0,1,...,n\}$.

We partition $G$ into sets $U_0,\ldots,U_n$ as follows. First, enumerate the elements of $G$ as $x_1,x_2,x_3,\ldots$ in such a way that $|x_m|$ is non-decreasing. We will specify for the $x_m$ in turn which set $U_j$ will contain $x_m$. Note that for each $m$ and each $r\ge|x_m|$ there is a copy of $x_m$ lying in the component $B_G(e,r)$ of $B$.

There exists $i_1$ and an infinite sequence $r_{1,1}<r_{1,2}<\ldots$ such that for each $r_{1,j}$ the copy of $x_1$ in the component $B_G(e,r_{1,j})$ lies in a set belonging to $\mathcal U_{i_1}$. We declare that $x_1\in U_{i_1}$. Similarly, there exists $i_2$ and an infinite subsequence $r_{2,1}<r_{2,2}<\ldots$ of $r_{1,1}<r_{1,2}<\ldots$ such that for each $r_{2,j}$ the copy of $x_2$ in the component $B_G(e,r_{2,j})$ lies in a set belonging to $\mathcal U_{i_2}$. We declare that $x_2\in U_{i_2}$. Continuing in this way, for each $m$ in turn there exists $i_m$ and an infinite subsequence $r_{m,1}<r_{m,2}<\ldots$ of $r_{m-1,1}<r_{m-1,2}<\ldots$ such that for each $r_{m,j}$ the copy of $x_m$ in the component $B_G(e,r_{m,j})$ lies in a set belonging to $\mathcal U_{i_m}$. We declare that $x_m\in U_{i_m}$.

It follows from the definition of the $\mathcal U_i$ that each $U_i$ can be partitioned into subsets of diameter at most $S$ that are $R$-disjoint, and this completes the proof.
\end{proof}

\section{The asymptotic dimension of arbitrary box spaces}\label{sec:arb}
Yamauchi \cite[Theorem 1.3]{yamauchi} shows that a coarse disjoint union of a sequence of graphs with girth tending to infinity has asymptotic dimension either infinite or at most $1$. The following is an adaptation of his argument. We are grateful to Rufus Willett for pointing it out to us.

\begin{proposition}\label{boxSplit}
Let $G$ be an infinite, residually finite, finitely generated group and let $N_n$ be a filtration. Then $\asdim\left(\Box_{(N_n)}G\right)$ is either infinite or equal to $\asdim G$. 
\end{proposition}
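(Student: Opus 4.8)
The plan is to use \cref{cor:split}: since $\Box_{(N_n)}G$ is a coarse disjoint union of the components $G/N_n$, it suffices to show that if $\asdim\left(\Box_{(N_n)}G\right)$ is finite, say equal to $m$, then $\asdim G \le m$, since the reverse inequality $\asdim\left(\Box_{(N_n)}G\right)\le\asdim G$ already follows from \cref{cor:split} and \cref{lem:unif.asdim} (the components $G/N_n$ are quotients, not subspaces, so \cref{lem:unif.asdim} does not literally apply to them — but \cref{isoballs} says their large balls \emph{are} isometric to balls in $G$, and this is exactly the content we will need to exploit in both directions).

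First I would fix $R>0$ and aim to produce $S>0$ with $(R,S)\text{-}\dim G\le m$. By hypothesis and \cref{cor:split} we have $\asdim(G/N_n)\le_\unif m$, so there is $S>0$ such that $(R,S)\text{-}\dim(G/N_n)\le m$ for every $n$; fix such an $S$ and consider a large ball $B_G(e,r)$ with $r$ much bigger than $S$. By \cref{isoballs} there is some quotient $G/N_i$ in which the ball $B_{G/N_i}(e,r)$ is isometric to $B_G(e,r)$; transporting the $R$-disjoint, diameter-$\le S$ families $\mathcal U_0,\ldots,\mathcal U_m$ covering $G/N_i$ back through this isometry gives such families covering $B_G(e,r)$ — strictly speaking I should intersect each transported set with $B_G(e,r-S)$ or similar so that membership is detected purely inside the isometric ball, which is fine because sets of diameter $\le S$ meeting $B_G(e,r-S)$ lie inside $B_G(e,r)$. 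This shows $(R,S)\text{-}\dim B_G(e,r-S)\le m$ for every large $r$, hence for every ball. Finally I would assemble these local covers into a global cover of $G$ by a diagonalisation exactly as in the proof of \cref{lem:cdu.balls}: enumerate $G$ with non-decreasing word length, and for each element pass to a subsequence of radii along which that element's copy lands in a fixed one of the $m+1$ families, thereby assigning it to $U_j$; the resulting partition $U_0,\ldots,U_m$ of $G$ refines into $R$-disjoint families of diameter $\le S$.

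Alternatively, and perhaps more cleanly, I would invoke \cref{lem:cdu.balls} directly: let $B$ be a coarse disjoint union of the balls $B_G(e,r)$, so $\asdim B=\asdim G$. The argument of the previous paragraph shows each $B_G(e,r)$ embeds isometrically (up to the harmless $S$-shrinkage) into some component of $\Box_{(N_n)}G$, with the \emph{same} $R\mapsto S$ control, so $\asdim\{B_G(e,r)\}_r \le_\unif \asdim\left(\Box_{(N_n)}G\right)$, and then \cref{cor:split} gives $\asdim B\le\asdim\left(\Box_{(N_n)}G\right)$, i.e. $\asdim G\le\asdim\left(\Box_{(N_n)}G\right)$.

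The main obstacle is purely bookkeeping rather than conceptual: one has to be careful that the cover of a quotient $G/N_i$, when pulled back via the ball isometry of \cref{isoballs}, actually yields \emph{subsets of $G$} with the correct diameter and disjointness — a set $U$ in $G/N_i$ of diameter $\le S$ that is far from $e$ may not lift to anything of diameter $\le S$ in $G$. The fix is to restrict attention to those $U$ meeting a concentric ball of radius $r-S$: such $U$ are entirely contained in $B_{G/N_i}(e,r)$ and hence correspond, under the isometry of \cref{isoballs} applied with radius $r$, to genuine subsets of $B_G(e,r)\subseteq G$. Since $r$ is arbitrary this loses nothing, and the diagonalisation in \cref{lem:cdu.balls} then runs without change.
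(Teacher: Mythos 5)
There is a genuine gap: you have dispatched the hard half of the proposition in a parenthesis. The inequality $\asdim\left(\Box_{(N_n)}G\right)\le\asdim G$ does \emph{not} follow from \cref{cor:split} together with \cref{lem:unif.asdim}, and your own caveat --- that the components $G/N_n$ are quotients rather than subspaces of $G$ --- is not a bookkeeping issue that \cref{isoballs} repairs. Knowing that balls of radius $k$ in $G/N_i$ are isometric to balls in $G$ for all $i\ge i_k$ says nothing about the geometry of $G/N_i$ at scales comparable to its diameter, and the asserted inequality is false in general: a box space of a free group can be an expander, hence has infinite asymptotic dimension (expanders fail property A), while the free group itself has asymptotic dimension $1$. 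If $\asdim(G/N_n)\le_\unif\asdim G$ held for every filtration, \cref{cor:split} would bound every such box space by $1$. So this inequality can only be proved \emph{using} the hypothesis $\asdim\left(\Box_{(N_n)}G\right)=m<\infty$, which your argument never invokes for this direction.

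The paper's proof of this direction (an adaptation of Yamauchi's argument) is where the real work lies: for each $k$ one takes the $k$-disjoint families $\mathcal{U}_0^k,\ldots,\mathcal{U}_m^k$ of diameter at most $S_k$ covering the box space --- these exist precisely because $m<\infty$ --- and observes that each piece $U$ lying in a component $G/N_i$ with $i\ge i_k$ has diameter at most $S_k$ and is therefore isometric to a subset of $G$ by \cref{isoballs}, so that \cref{lem:unif.asdim} applies to these \emph{pieces} rather than to the components; one then regroups the pieces into $m+1$ families and applies \cref{lem:unif.disjoint} (with the finite sets $F_k$ consisting of the shallow components) and the Finite Union Theorem (\cref{lem:fut}) to conclude $\asdim\left(\Box_{(N_n)}G\right)\le\asdim G$. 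Your treatment of the other inequality, $\asdim G\le\asdim\left(\Box_{(N_n)}G\right)$, is essentially correct and matches the paper (via \cref{lem:cdu.balls} and the isometric copies of balls of $G$ inside deep components), but that is the easy direction; without the Yamauchi-style argument the proof is incomplete.
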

\begin{proof}
We may assume that $\asdim\left(\Box_{(N_n)}G\right)<\infty$, and so $\asdim\left(\Box_{(N_n)}G\right)=m$ for some $m\in\N$. We first prove that $\asdim\left(\Box_{(N_n)}G\right)\le\asdim G$.

By definition, for every $k\in\N$ there exists $S_k\in\N$ such that $(k,S_k)\text{-}\dim\left(\Box_{(N_n)}G\right)\le m$. For every such $k$ there therefore exist $k$-disjoint families $\mathcal{U}_0^k,\mathcal{U}_1^k,\ldots,\mathcal{U}_m^k$ of subsets of $\Box_{(N_n)}G$, such that $\diam(U)\le S_k$ for every $U\in\mathcal U_j^k$ and such that $\bigcup_{j=1}^m\mathcal U_j^k$ covers $\Box_{(N_n)}G$.
By \cref{isoballs} we can take a sequence $i_k\to\infty$ such that for every $i\ge i_k$ the balls of radius $\max\{k,S_k\}$ in $G/N_i$ are isometric to balls of radius $\max\{k,S_k\}$ in $G$. Without loss of generality we may assume that $(i_k)_k$ is a non-decreasing sequence.

If $U\in\mathcal U_j^k$ satisfies $U\subset G/N_i$ for some $i$ then $U$ is contained in a ball of radius at most $S_k$ inside $G/N_i$, and if this $i$ is at least $i_k$ then $U$ is isometric to a subspace of $G$. \cref{lem:unif.asdim} therefore implies that
\begin{equation}\label{eq:asdim.unif}
\asdim\left(\bigcup_{j,k}\{U\in\mathcal U_j^k:U\subset G/N_i\text{ for some }i\ge i_k\}\right)\le_\unif\asdim G.
\end{equation}

Now if $i\ge i_k$ then $G/N_i$ is at a distance greater than $S_k$ from its complement in $\Box_{(N_n)}G$, and so if $U\in\mathcal{U}_j^k$ intersects $G/N_i$ non-trivially then in fact $U\subset G/N_i$. This implies that for every $i\ge i_k$ the set $G/N_i$ is covered by the sets $U\in\mathcal U_j^k$ with $U\subset G/N_i$ and $j\in\{0,\ldots,m\}$. This means that, defining families $\mathcal{Y}_j^k$ of subsets of $\Box_{(N_n)}G$ for $j\in\{0,\ldots,m\}$ and $k\in\N$ via
\[
\mathcal{Y}_j^k=\{U\in\mathcal{U}_j^k : U\subset G/N_i\text{ for some }i\in[i_k,i_{k+1})\},
\]
and then defining families $\mathcal{Y}_j^{\ge k}$ via
\[
\mathcal{Y}_j^{\ge k}=\bigcup_{k'\ge k}\mathcal{Y}_j^{k'},
\]
for each $k$ the set $\bigcup_{i=i_k}^\infty G/N_i$ is covered by the families $\mathcal{Y}_j^{\ge k}$ with $j\in\{0,\ldots,m\}$. In particular, setting $F=\bigcup_{i=1}^{i_1-1}G/N_i$ and $X_j=\bigcup_{Y\in\mathcal{Y}_j^{\ge 1}}Y$ we have
\begin{equation}\label{eq:box.covering}
\Box_{(N_n)}G=F\cup\bigcup_{j=0}^mX_j.
\end{equation}

Note that for each $j$ we have $\asdim\mathcal{Y}_j^{\ge 1}\le_\unif\asdim G$ by \eqref{eq:asdim.unif}. Note also that each family $\mathcal{Y}_j^{\ge k}$ is $k$-disjoint by the definitions of $\mathcal{U}_j^k$ and $i_k$. Finally, note that if we write $F_k=\bigcup_{i=1}^{i_k-1}G/N_i$ then we have $\mathcal{Y}_j^{\ge k}=\{ Y\backslash F_k  :   Y\in\mathcal{Y}_j^{\ge1}\}$. Since $F_k$ is finite, and hence of asymptotic dimension $0$, it therefore follows from \cref{lem:unif.disjoint} that $\asdim X_j\le\asdim G$, and then from \eqref{eq:box.covering} and \cref{lem:fut} that
\[
\asdim\left(\Box_{(N_n)}G\right)\le\asdim G,
\]
as desired.

Conversely, since $N_n$ is a filtration there is a subspace $B$ of $\Box_{(N_n)}G$ that is isometric to a coarse disjoint union of the balls $B_G(e,R)$ as $R$ ranges over the natural numbers. It follows from \cref{lem:cdu.balls} that $\asdim B=\asdim G$, and since $B$ is a subspace of $\Box_{(N_n)}G$ this implies that
\[
\asdim\left(\Box_{(N_n)}G\right)\ge\asdim G,
\]
which completes the proof.
\end{proof}

\section{Coarse disjoint unions of groups of polynomial growth}\label{sec:poly}
A group G with fixed finite generating set is said to have \emph{polynomial growth of degree $d$} if there exists $C>0$ such that $|B_G(e,r)|\le Cr^d$ for every $r\ge1$. It is well known and easy to check that this notion does not depend on the choice of finite generating set.

We say that a family $(G_\alpha)_{\alpha\in A}$ of groups with fixed generating sets $S_\alpha$ has \emph{uniform polynomial growth of degree at most $d$} if there exists $C>0$ such that $|B_{G_\alpha}(e,r)|\le Cr^d$ for every $r\ge1$ for every $\alpha\in A$.

\begin{proposition}\label{finite}
Let $G_n$ be a sequence of finite groups with generating sets $S_n$, and suppose that $(\Cay(G_n,S_n))_{n=1}^\infty$ has uniform polynomial growth of degree at most $d$. Let $X$ be a coarse disjoint union of the $\Cay(G_n,S_n)$. Then $\asdim X\le4^d$.
\end{proposition}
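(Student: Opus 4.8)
The plan is to produce, for each given $R>0$, a bounded-diameter covering of $X$ with controlled $R$-multiplicity, using \cref{def:asdim}; the number $4^d$ will arise as an upper bound on the multiplicity minus one. The key point is that in a group of polynomial growth of degree at most $d$, a ball of radius $r$ contains at most $C r^d$ elements, so it cannot contain too many points that are pairwise $r$-separated: indeed if $y_1,\dots,y_N$ lie in a common ball of radius $r$ and are pairwise at distance $>r/2$, then the balls $B(y_i,r/4)$ are disjoint and each contains at least one element, while all are contained in a ball of radius $2r$, giving $N\le C(2r)^d/1$-type bounds after one is careful about the lower volume bound; the clean statement one wants is that any $\tfrac r2$-separated subset of a ball of radius $r$ has size at most $4^d$ up to constants. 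This is the combinatorial engine of the proof.

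Concretely, fix $R>0$ and set a scale parameter, say $r=R$ or a small multiple thereof. Since $X$ is a coarse disjoint union, distinct components $\Cay(G_n,S_n)$ are infinitely far apart except for finitely many pairs, so after discarding or lumping together a finite (hence asymptotic-dimension-zero) piece via \cref{lem:fut}, it suffices to cover each component separately by a family of uniformly bounded diameter and $R$-multiplicity at most $4^d$, with the bound on $S$ independent of $n$ — this uniformity is exactly where uniform polynomial growth is used. Within a single finite group $G_n$, I would take a maximal $r$-separated subset $\{y_1,\dots,y_k\}\subset G_n$ (a net), so that the balls $B(y_i,r)$ cover $G_n$, and then greedily $2$-colour, or rather $(4^d)$-colour, an auxiliary graph on the $y_i$ in which $y_i\sim y_j$ when $d(y_i,y_j)\le 3r$ (or whatever threshold makes the multiplicity bound work); the degree bound in this graph is at most $4^d-1$ by the volume argument above, so a greedy colouring with $4^d$ colours exists, and grouping the balls of a common colour produces families that are $R$-disjoint after slightly shrinking, each of diameter $O(r)=O(R)$. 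Assembling these across all components and adding back the finite lumped piece gives the desired covering.

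The main obstacle I expect is bookkeeping rather than conceptual: getting the constants to line up so that the final multiplicity is exactly $4^d$ (and not, say, $C\cdot 4^d$ with the polynomial-growth constant $C$ surviving). The honest way to achieve a clean constant is probably to iterate the colouring/net construction across $d$ "octaves" of scale, doubling the radius each time — this is the standard trick by which one trades a dimension-$d$ covering problem for $d$ independent dimension-$1$ problems, each contributing a factor of $4$ (two colours, but with a factor of $2$ lost in passing between scales), yielding $4^d$ — so I would expect the proof to proceed by induction on $d$, with the base case $d=1$ handled directly and the inductive step quotienting out by a single coarse scale. The uniformity of the growth bound is what lets the same $S$ work for every $G_n$ and hence lets \cref{cor:split} or a direct appeal to \cref{def:asdim} close the argument.
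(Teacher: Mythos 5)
Your overall architecture --- a maximal separated net at a scale comparable to $R$ in each component, a multiplicity bound via volume comparison, and an appeal to \cref{def:asdim} --- is the same as the paper's. But there is a genuine gap at precisely the step you dismiss as ``bookkeeping'': the claim that an $\frac{r}{2}$-separated subset of a ball of radius $r$ has size at most $4^d$ (equivalently, that your auxiliary graph has degree at most $4^d-1$). The hypothesis only gives the \emph{upper} bound $|B(s)|\le Cs^d$, so your packing argument yields $N\le |B(2r)|/\min_i|B(y_i,r/4)|$, and with no lower volume bound available the best you can say is $|B(y_i,r/4)|\ge r/4$, giving $N\le C'r^{d-1}$ --- unbounded in $r$, with the constant $C$ surviving besides. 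So as written the ``combinatorial engine'' produces no finite multiplicity bound at any prescribed scale, and your proposed fix (an induction on $d$ over octaves of scale, reducing to $d$ one-dimensional problems) does not address this: there is no product structure in a group of polynomial growth that effects such a reduction, and each ``octave'' faces the same missing lower volume bound.

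The missing idea is that the scale must be \emph{chosen}, not prescribed. Set $K=4^d+1$ and $S_0=4^{m+1}R$ with $m$ large enough that $(K/4^d)^m\ge CR^d$. Then for each $n$ there is some $R_n$ with $R\le R_n\le S_0/4$ satisfying the quadrupling inequality $|B_{G_n}(4R_n)|\le K|B_{G_n}(R_n)|$: if it failed at every scale $4^iR\le S_0$, iterating would give $K^m|B_{G_n}(R)|<|B_{G_n}(4^mR)|\le C4^{md}R^d$, contradicting $K^m\ge C4^{md}R^d$. Taking $X_n$ maximal with the balls $B(x,R_n)$, $x\in X_n$, pairwise disjoint and covering by the balls $B(x,2R_n)$, any point $z$ is within distance $R$ only of those doubled balls whose centres lie in $B(z,3R_n)$; the corresponding disjoint $R_n$-balls all sit inside $B(z,4R_n)$, and since all balls of a given radius in a Cayley graph have equal cardinality the count is at most $|B(4R_n)|/|B(R_n)|\le K$ --- no constant $C$, no lower volume bound. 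This single scale selection is what makes the constant $4^d$ clean. A minor further simplification: the greedy colouring is unnecessary, since \cref{def:asdim} accepts a single covering of $R$-multiplicity at most $4^d+1$, so the doubled balls together with the union of the components of diameter at most $R$ already finish the proof.
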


\begin{proof}
We start with the standard observation that a polynomial growth bound implies a so-called \emph{doubling} condition, as used by Gromov \cite{gromov} in his proof of his polynomial-growth theorem, for example. Specifically, let $K=4^d+1$, let $R>0$, and take $S_0=4^{m+1}R$ with $m$ such that $(K/4^d)^m\ge CR^d$. Then we claim that for every $n$ there exists an $R_n$ such that $R\le R_n\le \frac{S_0}{4}$ and $|B_{G_n}(4R_n)|\le K|B_{G_n}(R_n)|$. Indeed, if this were not the case then $K^i|B_{G_n}(R)|<|B_{G_n}(4^iR)|\le C4^{id}R^d$ for every $i$ with $4^iR\le S_0$, and so setting $i=m$ would imply that $C4^{md}R^d > K^m|B_{G_n}(R)|\ge C4^{md}R^d|B_{G_n}(R)|$, contradicting the growth assumption. 

Following Ruzsa \cite{ruzsa}, for every $n\in \N$ with $\diam(G_n)>R$ we take $X_n$ maximal in $G_n$ such that $B_{G_n}(x,R_n)$ and $B_{G_n}(y,R_n)$ are disjoint for every $x$ and $y$ in $X_n$. We then define $F_R=\bigcup_{n\,:\,\diam(G_n)\le R}G_n$, take
\[
\mathcal{U} = \{F_R\} \cup \bigcup_{n\,:\,\diam(G_n)> R}\{B_{G_n}(x,2R_n):x\in X_n\},
\]
and set $S=\max\{S_0,\diam F_R\}$.

First we show that $\mathcal{U}$ is a covering of $\bigsqcup_{n}G_n$. Let $z\in G_m$ for some $m$. If $\diam(G_m)\le R$, then $z\in F_R$. If $\diam(G_m)> R$, then as $X_m$ is maximal, there exists an $x\in X_m$ such that $B_{G_m}(z,R_m) \cap B_{G_m}(x,R_m)$ is non-empty, so $z\in B_{G_m}(x,2R_m)$.

Next we note that $\diam(U)\le S$ for every $U\in\mathcal{U}$. For $U=F_R$ this is true by definition of $S$. On the other hand, for $U\in\mathcal{U}$ with $U\ne F_R$ we have $U\subset G_m$ for some $m$ and $\diam(U)=4R_m\le S_0\le S$.

Finally, we show that $\mathcal{U}$ has $R$-multiplicity at most $K$. The $R$-multiplicity of $\mathcal{U}$ in $G_n$ with $\diam(G_n)\le R$ is $1\le K$, so take $z\in G_m$ with $m$ such that $\diam(G_m)\ge R$.
Now for every $B_{G_m}(x,2R_m)\in\mathcal{U}$ which has an element at a distance at most $R$ to $z$, we have that $x\in B_{G_m}(z,2R_m+R)\subset B_{G_m}(z,3R_m)$.
Now consider $B_{G_m}(z,3R_m)\cap X_m$. As $B_{G_m}(x,R_m)$ and $B_{G_m}(y,R_m)$ are disjoint for any $x$ and $y$ in $X_m$, we have that $|B_{G_m}(z,3R_m)\cap X_m|\le \frac{|B_{G_m}(z,4R_m)|}{|B_{G_m}(z,R_m)|} \le K$.
Therefore the $R$-multiplicity of $\mathcal{U}$ is at most $K=4^d+1$, and so $\asdim\bigsqcup_n G_n \le 4^d$ by \cref{def:asdim}.
\end{proof}

\begin{corollary}\label{cor:full.box}
Let $G$ be a finitely generated virtually nilpotent group. Then $\Box_fG$ has finite asymptotic dimension. 
\end{corollary}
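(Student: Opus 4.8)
The plan is to deduce \cref{cor:full.box} from \cref{finite} by reducing the full box space to a coarse disjoint union of finite Cayley graphs satisfying a uniform polynomial growth bound. The key point is that a finitely generated virtually nilpotent group $G$ has polynomial growth of some degree $d$ (by Gromov's theorem, or directly since $G$ is virtually nilpotent via the Bass--Guivarc'h formula), and that this polynomial bound passes to all its finite quotients \emph{uniformly}. Concretely, if $|B_G(e,r)|\le Cr^d$ for all $r\ge1$, then for any finite-index normal subgroup $N$ and any $r$, the ball $B_{G/N}(e,r)$ is the image under the quotient map of $B_G(e,r)$, and hence $|B_{G/N}(e,r)|\le|B_G(e,r)|\le Cr^d$. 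Thus the family of all finite quotients $\{G/N\}$, each with the generating set induced by the fixed generating set $S$ of $G$, has uniform polynomial growth of degree at most $d$.

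Next I would note that $\Box_fG$ is by definition the disjoint union of all finite quotients of $G$, with the component metrics being the Cayley-graph metrics induced by $S$ and with the inter-component distances equal to the sum of the diameters; by the remarks in \cref{sec:background} this is a coarse disjoint union of the $\Cay(G/N,\bar S)$ (property (2) holds by construction, and property (3) follows since a residually finite group has only finitely many subgroups of each index, so only finitely many quotients of each diameter). If only finitely many distinct finite quotients occur the statement is trivial by \cref{lem:fut}, so we may assume there are infinitely many and enumerate them as a sequence $(G_n)$ of finite groups with the induced generating sets. Then \cref{finite} applies verbatim and gives $\asdim\Box_fG\le4^d<\infty$.

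I do not anticipate a serious obstacle here: the argument is essentially bookkeeping around \cref{finite}. The one point that requires a word of care is the passage from "polynomial growth of $G$" to "uniform polynomial growth of the family of finite quotients" — but this is immediate from surjectivity of the quotient maps and the fact that they are $1$-Lipschitz (indeed distance-nonincreasing) for the induced generating sets. A second minor point is checking that $\Box_fG$ genuinely is a coarse disjoint union rather than merely a disjoint union of metric spaces, which uses residual finiteness to bound the number of quotients of bounded size; this is exactly the observation recorded after the definition of coarse disjoint union, that $\diam\to\infty$ together with property (2) implies property (3). With these two observations in place the corollary follows directly from \cref{finite}.
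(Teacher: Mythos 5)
Your proposal is correct and follows essentially the same route as the paper: both deduce the result from \cref{finite} by observing that the polynomial growth bound $|B_G(e,r)|\le Cr^d$ passes to every finite quotient uniformly, since quotient maps are surjective and distance-nonincreasing on balls. Your extra verification that $\Box_fG$ is genuinely a coarse disjoint union (finitely many quotients of each diameter, which really uses finite generation rather than residual finiteness) is a point the paper leaves implicit but is correct and harmless.
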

\begin{proof}
As $G$ is virtually nilpotent there exist constants $k$ and $C$ such that $|B_G(e,r)|\le Cr^k$ for every $r\ge1$. This also means that $|B_{G/N}(e,r)|\le Cr^k$ for every $r\ge1$ for every $N\lhd G$, and so the result follows from \cref{finite}.
\end{proof}

\begin{proof}[Proof of \cref{thm:hirsch}]
Since $\Box_{(N_n)}G$ is a subspace of $\Box_fG$ we have $\asdim \Box_{(N_n)}G \le \asdim \Box_fG<\infty$ by \cref{cor:full.box}. \cref{boxSplit} therefore implies that $\asdim \Box_{(N_n)}G=\asdim(G)$. Since $G$ is virtually polycyclic, the result therefore follows from \eqref{eq:hirsch}. 
\end{proof}

\bibliographystyle{alpha}
\bibliography{bib.bib}

\end{document}